\documentclass[11pt,reqno]{amsart}
\usepackage{microtype}

\usepackage[dvipsnames]{xcolor}
\usepackage
[colorlinks=true,linkcolor=Maroon,citecolor=OliveGreen]
{hyperref}
\usepackage{amsmath, amsfonts, amsthm, amssymb, bm}
\usepackage[shortlabels]{enumitem}
\setlist[enumerate]{label={(\arabic*)}}
\usepackage[capitalize]{cleveref}
\crefname{equation}{}{}

\usepackage[numbers]{natbib}

\newtheorem{theorem}{Theorem}
\newtheorem{lemma}[theorem]{Lemma}

\newtheorem{corollary}[theorem]{Corollary}

\newtheorem*{theorem*}{Theorem}

\newtheorem*{conjecture}{Conjecture}

\theoremstyle{definition}

\theoremstyle{remark}

\def\R{\mathbf{R}}

\newcommand\opr[1]{\operatorname{#1}}

\newcommand{\pp}{\mathbb{P}}
\newcommand{\ee}{\mathbb{E}}
\def\half{{\tfrac12}}

\def\KL{\opr{KL}}
\newcommand{\norm}[1]{\left\lVert#1\right\rVert}

\numberwithin{equation}{section}

\author{Anay Aggarwal}
\address{\parbox{\linewidth}{Stanford University, CA, USA\vspace{0.1cm}}}
\email{anayagg@stanford.edu}

\begin{document}
\title[Shifted R\'enyi Divergence]{Resolution of a Conjecture on Shifted R\'enyi Divergence}

\begin{abstract}
Altschuler and Chewi \cite{altschuler2023fasterhighaccuracylogconcavesampling} conjecture that the sub-Gaussian Orlicz--Wasserstein shifted R\'enyi divergence between two isotropic Gaussians of equal covariance is attained by a deterministic shift of the mean, which would give an exact closed form for the shift budget consumed by their analysis. We show that this conjecture is true when $q=1$, where the R\'enyi divergence degenerates to the Kullback--Leibler divergence, and false for every $q>1$ outside the degenerate regime.
\end{abstract}

\maketitle

\section{Introduction}

In \cite{altschuler2023fasterhighaccuracylogconcavesampling}, the authors make a conjecture that, if true, would provide exact closed-form bounds in their analysis used to solve a longstanding open problem in MCMC complexity. This is listed as Conjecture B.1 in the appendix of the paper. In this note we identify exactly when the conjecture is true and false. Before stating the conjecture, let us build up the setting of the problem.

Let $(\Omega,\mathcal{F},\pp)$ be a probability space and $(E,\mathcal{E})$ a measurable space. We write $\mu\ll \nu$ for probability measures $\mu,\nu$ if every $\nu$-null set is also $\mu$-null. If $\mu\ll \nu$ then the Radon--Nikodym derivative $f=d\mu/d\nu$ is well defined. When $\Pi(\mu,\nu)$ is the set of couplings of $\mu$ and $\nu$, the $p$-Wasserstein distance is defined as
$$W_p(\mu,\nu):=\left(\inf_{\pi\in \Pi(\mu,\nu)}\int_{E\times E}\norm{x-y}^p\,\pi(dx,dy)\right)^{1/p}.$$
It is helpful to generalize this definition to the Orlicz--Wasserstein distance. Let $\psi:[0,\infty)\to[0,\infty)$ be an Orlicz function, i.e.\ $\psi$ is convex, $\psi(0)=0$, $\psi(t)>0$ for $t>0$, and $\psi(t)\to\infty$ as $t\to\infty$. Then the Orlicz norm is
$$\norm{Z}_{\psi}:=\inf\{\lambda>0:\ee\,\psi(\norm{Z}/\lambda)\le 1\},$$
where $Z$ is a random variable over $\R^d$ and $\norm{\bullet}$ is the Euclidean norm. Then the Orlicz--Wasserstein distance is
$$W_{\psi}(\mu,\nu)=\inf\{\norm{X-Y}_{\psi}:(X,Y)\sim \pi\in \Pi(\mu,\nu)\}.$$
This allows us to generalize the R\'enyi divergence
$$R_q(\mu\|\nu)=\frac{1}{q-1}\log \int_E \left(\frac{d\mu}{d\nu}\right)^q d\nu$$
to the \emph{shifted R\'enyi divergence}
$$R^{(w)}_{q,\psi}(\mu\|\nu)=\inf\{R_q(\mu'\|\nu):W_{\psi}(\mu,\mu')\le w\}$$
for some $w\ge 0$ and Orlicz function $\psi$. As usual, $R_1$ is defined by continuity and equals the Kullback--Leibler divergence $\KL$. The conjecture is on a closed form for this shifted divergence for two normals in $\R^d$ with the same covariance matrix.

\begin{conjecture}[\cite{altschuler2023fasterhighaccuracylogconcavesampling}, Conjecture B.1]\label{conj:main}
Let $\psi_2(t)=e^{t^2}-1$, $\mu=N(x,\sigma^2I_d)$, $\nu=N(0,\sigma^2 I_d)$, and set
$$s=w\sqrt{\log 2},\qquad m=\norm{x},\qquad c^*=\max(0,1-s/m).$$
Then for all $q\ge 1$,
$$R^{(w)}_{q,\psi_2}(\mu\|\nu)=R_q\big(N(c^*x,\sigma^2I_d)\,\big\|\,N(0,\sigma^2I_d)\big)=\frac{q(m-s)_+^2}{2\sigma^2}.$$
\end{conjecture}

The $\le$ direction of this identity is trivial: a deterministic vector $v$ has $\norm{v}_{\psi_2}=\norm{v}/\sqrt{\log 2}$, so the shift carrying $x$ to $c^*x$ has Orlicz norm exactly $\min(w,m/\sqrt{\log 2})$ and is thus feasible, and its cost is computed by \cref{lem:gauss} below. It is the $\ge$ direction that requires justification. In this note we prove that the identity holds for $q=1$ and fails for every $q>1$ in the nondegenerate regime $0<s<m$. For the rest of the paper we abbreviate $\psi_2$ as $\psi$ and write $\mu_0:=(m-s)_+$, and we drop $\psi$ from the subscript of the shifted divergence.

We record the two standard computations we use. The first is the equal-covariance Gaussian formula, and the second is the general scalar formula of van Erven and Harremo\"es \cite{van_Erven_2014}, which we need because our counterexample perturbs the variance.

\begin{lemma}\label{lem:gauss}
For $q\ge 1$ and $m_1,m_2\in\R^d$,
$$R_q\big(N(m_1,\sigma^2I_d)\,\big\|\,N(m_2,\sigma^2I_d)\big)=\frac{q\norm{m_1-m_2}^2}{2\sigma^2}.$$
\end{lemma}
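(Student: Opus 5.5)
The plan is a direct computation of the Gaussian integral, reducing the case $q=1$ to a limit. First, by translation invariance, set $\delta:=m_1-m_2$. The map $y\mapsto y-m_2$ pushes both measures forward to $N(\delta,\sigma^2I_d)$ and $N(0,\sigma^2I_d)$, and Rényi divergence is unchanged under a common bijective pushforward. By rotation invariance of the isotropic Gaussian we may further assume $\delta=\norm{\delta}e_1$. The densities then factor over coordinates, and the Radon--Nikodym derivative depends only on the first coordinate, so the problem reduces to $d=1$ with means $a:=\norm{\delta}$ and $0$.

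For $q>1$, I write out the density ratio explicitly:
$$\frac{d\mu}{d\nu}(y)=\exp\Big(\frac{ay}{\sigma^2}-\frac{a^2}{2\sigma^2}\Big).$$
Hence
$$\int\Big(\frac{d\mu}{d\nu}\Big)^q d\nu=\exp\Big(-\frac{qa^2}{2\sigma^2}\Big)\,\ee\Big[e^{qaY/\sigma^2}\Big],$$
where $Y\sim N(0,\sigma^2)$. The Gaussian moment generating function gives
$$\ee\Big[e^{qaY/\sigma^2}\Big]=\exp\Big(\frac{q^2a^2}{2\sigma^2}\Big),$$
so the integral equals $\exp\big(q(q-1)a^2/(2\sigma^2)\big)$. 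Taking the logarithm and dividing by $q-1$ yields $qa^2/(2\sigma^2)$.

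For $q=1$, $R_1$ is defined by continuity. The expression $qa^2/(2\sigma^2)$ is continuous in $q$, so its limit as $q\downarrow1$ is $a^2/(2\sigma^2)$. As a check, I will also compute $\KL$ directly as $\ee_\mu\log(d\mu/d\nu)$. With $Y\sim N(a,\sigma^2)$ under $\mu$, this gives $a\cdot a/\sigma^2-a^2/(2\sigma^2)=a^2/(2\sigma^2)$, which agrees with the limit.

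There is no real obstacle here. The only point requiring care is justifying the reduction to one dimension, namely the invariance of $R_q$ under a measurable bijection applied to both arguments. Alternatively, one can skip the reduction entirely: the same moment generating function computation works directly in $\R^d$ using $\ee\, e^{\langle t,Y\rangle}=e^{\sigma^2\norm{t}^2/2}$ with $t=q\delta/\sigma^2$. I would present it in this form to avoid the reduction altogether.
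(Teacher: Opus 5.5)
Your proof is correct. The density ratio $\exp(\langle m_1-m_2,y-m_2\rangle/\sigma^2-\norm{m_1-m_2}^2/(2\sigma^2))$ and the Gaussian moment generating function give $\int (d\mu/d\nu)^q\,d\nu=\exp(q(q-1)\norm{m_1-m_2}^2/(2\sigma^2))$, and the case $q=1$ follows by continuity or directly from $\KL$. The paper gives no proof of this lemma, recording it only as a standard computation, so there is nothing to compare against; your moment-generating-function derivation is the usual way to establish it.
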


\begin{lemma}\label{lem:veh}
Let $q>1$ and let $\sigma_q^2:=(1-q)\sigma_1^2+q\sigma_2^2$. If $\sigma_q^2>0$ then
$$R_q\big(N(\mu_1,\sigma_1^2)\,\big\|\,N(\mu_2,\sigma_2^2)\big)=\frac{q(\mu_1-\mu_2)^2}{2\sigma_q^2}+\frac{1}{1-q}\log\frac{\sigma_q}{\sigma_1^{1-q}\sigma_2^{q}},$$
and the divergence is $+\infty$ if $\sigma_q^2\le 0$. Moreover $R_q$ is additive across independent product factors.
\end{lemma}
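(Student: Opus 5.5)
The plan is a direct computation of the Hellinger-type integral $\int p_1^q p_2^{1-q}\,dx$, where $p_i$ is the density of $N(\mu_i,\sigma_i^2)$ and $\tfrac{d\mu}{d\nu}=p_1/p_2$. We then take $\tfrac{1}{q-1}\log$ of the result. Additivity is a separate, soft Fubini argument.

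\textbf{Step 1 (the exponent).} Write the integrand as
$$(2\pi)^{-1/2}\sigma_1^{-q}\sigma_2^{-(1-q)}\exp\Big(-\tfrac{q(x-\mu_1)^2}{2\sigma_1^2}-\tfrac{(1-q)(x-\mu_2)^2}{2\sigma_2^2}\Big).$$
The coefficient of $-x^2/2$ in the exponent is
$$\tfrac{q}{\sigma_1^2}+\tfrac{1-q}{\sigma_2^2}=\tfrac{q\sigma_2^2+(1-q)\sigma_1^2}{\sigma_1^2\sigma_2^2}=\tfrac{\sigma_q^2}{\sigma_1^2\sigma_2^2}.$$
If $\sigma_q^2\le 0$, the exponent is a quadratic in $x$ with nonnegative leading coefficient. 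In the borderline case $\sigma_q^2=0$ it is affine, or constant when $\mu_1=\mu_2$. In either case the integrand does not decay at infinity, so the integral is $+\infty$. Since $q>1$, this gives $R_q=+\infty$.

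\textbf{Step 2 (completing the square).} When $\sigma_q^2>0$, complete the square in $x$. The Gaussian integral contributes $\sqrt{2\pi}\,\sigma_1\sigma_2/\sigma_q$. The leftover constant in the exponent is the standard cross term for a weighted combination of two quadratics,
$$-\frac{q(1-q)(\mu_1-\mu_2)^2}{2\sigma_q^2}.$$
I would verify this constant by evaluating the exponent at its minimizer, $x^*=\big(q\mu_1\sigma_2^2+(1-q)\mu_2\sigma_1^2\big)/\sigma_q^2$. This bookkeeping is the only place an algebra slip is likely, and it is the main obstacle, such as it is.

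\textbf{Step 3 (assembling the formula).} Multiplying the pieces gives
$$\int p_1^qp_2^{1-q}=\frac{\sigma_1^{1-q}\sigma_2^{q}}{\sigma_q}\exp\Big(\frac{q(q-1)(\mu_1-\mu_2)^2}{2\sigma_q^2}\Big).$$
Applying $\tfrac1{q-1}\log$ yields $\tfrac{q(\mu_1-\mu_2)^2}{2\sigma_q^2}+\tfrac{1}{1-q}\log\tfrac{\sigma_q}{\sigma_1^{1-q}\sigma_2^q}$, as claimed.

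\textbf{Step 4 (additivity).} For $\mu=\bigotimes\mu_i$ and $\nu=\bigotimes\nu_i$ with $\mu_i\ll\nu_i$, we have $\tfrac{d\mu}{d\nu}=\prod_i \tfrac{d\mu_i}{d\nu_i}$. By Tonelli, since the integrand is nonnegative, $\int(\tfrac{d\mu}{d\nu})^q d\nu=\prod_i\int(\tfrac{d\mu_i}{d\nu_i})^q d\nu_i$. Taking logarithms turns the product into a sum. The value $+\infty$ propagates correctly, because every factor is positive.
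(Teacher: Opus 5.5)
Your proposal is correct: the exponent coefficient $\sigma_q^2/(\sigma_1^2\sigma_2^2)$, the cross term $-q(1-q)(\mu_1-\mu_2)^2/(2\sigma_q^2)$, the assembled integral, the divergence to $+\infty$ when $\sigma_q^2\le 0$, and the Tonelli argument for additivity all check out. The paper does not prove this lemma but quotes it from van Erven and Harremo\"es, and your completing-the-square computation is the standard derivation behind that formula; the only slip is that $x^*$ maximizes the exponent (equivalently, minimizes the quadratic form), which does not affect the argument.
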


\section{The case \texorpdfstring{$q=1$}{q=1}}

The positive result uses nothing about $\psi_2$ beyond $\psi_2^{-1}(1)=\sqrt{\log 2}$, so we state it for a general Orlicz function. The only input on the transport side is the following observation, which says that the deterministic shift is exactly extremal for the mean displacement over an Orlicz ball.

\begin{lemma}\label{lem:jensen}
Let $\psi$ be an Orlicz function and let $D$ be an $\R^d$-valued random variable with $\norm{D}_{\psi}\le w$. Then $\ee\norm{D}\le w\,\psi^{-1}(1)$.
\end{lemma}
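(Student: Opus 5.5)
The plan is to apply Jensen's inequality to the convex function $\psi$ at a scale slightly above $\norm{D}_\psi$, and then let the scale decrease to $\norm{D}_\psi$. If $\norm{D}_\psi=0$, the definition gives $\ee\,\psi(\norm{D}/\lambda)\le 1$ for every $\lambda>0$. We will deduce $D=0$ almost surely: on the event $\{\norm{D}>\epsilon\}$ we have $\psi(\norm{D}/\lambda)\ge \psi(\epsilon/\lambda)\to\infty$ as $\lambda\downarrow 0$, because $\psi$ is nondecreasing and unbounded. Hence that event has probability zero for every $\epsilon>0$, and the claim holds trivially. So we may assume $\norm{D}_\psi>0$.

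For $\norm{D}_\psi>0$, take any $\lambda>\norm{D}_\psi$ for which $\ee\,\psi(\norm{D}/\lambda)\le 1$. Such $\lambda$ exist arbitrarily close to $\norm{D}_\psi$. Indeed, the admissible set is upward closed, since $\psi$ is nondecreasing on $[0,\infty)$: it is convex with $\psi(0)=0$ and $\psi\ge0$. Jensen's inequality for the convex $\psi$ then gives
\[
\psi\big(\ee\norm{D}/\lambda\big)\le \ee\,\psi(\norm{D}/\lambda)\le 1 .
\]
This requires some care about integrability. The bound $\ee\,\psi(\norm{D}/\lambda)<\infty$ together with the at-least-linear growth of $\psi$ should give $\ee\norm{D}<\infty$: convexity with $\psi(0)=0$ and $\psi(t_0)>0$ implies $\psi(t)\ge (\psi(t_0)/t_0)\,t$ for $t\ge t_0$. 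In any case Jensen's inequality holds for nonnegative variables in the extended sense, so this point is minor.

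Next we invert $\psi$. On $[0,\infty)$, $\psi$ is convex, vanishes at $0$ and is positive afterwards, so it is strictly increasing: if $\psi(a)=\psi(b)$ with $0<a<b$, convexity would force $\psi(a)\le \frac{a}{b}\psi(b)<\psi(b)$. It is also continuous and tends to $\infty$, so $\psi^{-1}(1)$ is well defined and $\psi(t)\le 1$ iff $t\le\psi^{-1}(1)$. Applying this to the display gives $\ee\norm{D}\le \lambda\,\psi^{-1}(1)$. Letting $\lambda\downarrow\norm{D}_\psi\le w$ yields $\ee\norm{D}\le w\,\psi^{-1}(1)$.

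The main obstacle is only bookkeeping. The infimum in the Orlicz norm need not be attained a priori, which is why we approximate from above. One could instead show attainment by monotone convergence, since $\psi(\norm{D}/\lambda)$ increases as $\lambda\downarrow\norm{D}_\psi$ and $\psi$ is continuous. The other thing to check is that strict monotonicity of $\psi$ lets us pass from $\psi(t)\le1$ to $t\le\psi^{-1}(1)$. For $\psi_2$ this reads $\ee\norm{D}\le w\sqrt{\log 2}=s$, matching the deterministic shift, which is exactly what the $q=1$ argument will use.
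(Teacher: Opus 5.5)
Your proof is correct and is essentially the paper's argument, read through the other side of Jensen's inequality. The paper uses right-continuity of $\lambda\mapsto\ee\,\psi(\norm{D}/\lambda)$ to work directly at $\lambda=w$ and applies Jensen to the concave inverse $\psi^{-1}$ with $V=\psi(\norm{D}/w)$, which gives integrability of $\norm{D}$ for free; you instead apply Jensen to the convex $\psi$ at scales $\lambda\downarrow\norm{D}_\psi$ and then invert by strict monotonicity, which needs the linear-growth remark you supply but also covers the degenerate case $\norm{D}_\psi=w=0$.
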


\begin{proof}
By definition of the Orlicz norm and right-continuity of $\lambda\mapsto \ee\,\psi(\norm{D}/\lambda)$ we have $\ee\,\psi(\norm{D}/w)\le 1$. Set $V:=\psi(\norm{D}/w)$, so that $\norm{D}=w\,\psi^{-1}(V)$ and $\ee V\le 1$. Since $\psi$ is convex with $\psi(0)=0$, its inverse $\psi^{-1}$ is concave on $[0,\infty)$, so Jensen gives
$$\ee\norm{D}=w\,\ee\,\psi^{-1}(V)\le w\,\psi^{-1}(\ee V)\le w\,\psi^{-1}(1),$$
using monotonicity of $\psi^{-1}$ in the last step.
\end{proof}

For $\psi=\psi_2$ this reads $\ee\norm{D}\le w\sqrt{\log 2}=s$, and one can see the mechanism concretely: $V=e^{\norm{D}^2/w^2}-1$, $\psi_2^{-1}(v)=\sqrt{\log(1+v)}$, and concavity of $v\mapsto\sqrt{\log(1+v)}$ is what forbids a random displacement from purchasing more mean than a constant one.

\begin{theorem}\label{thm:q1}
The conjecture is true for $q=1$. More generally, for any Orlicz function $\psi$, with $s:=w\,\psi^{-1}(1)$,
$$R^{(w)}_{1,\psi}\big(N(x,\sigma^2I_d)\,\big\|\,N(0,\sigma^2I_d)\big)=\frac{(m-s)_+^2}{2\sigma^2},$$
and the infimum is attained at the deterministic shift $N(c^*x,\sigma^2I_d)$.
\end{theorem}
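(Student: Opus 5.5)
The upper bound is the one already noted after the conjecture. The deterministic shift carrying $x$ to $c^*x$ has $\psi$-Orlicz norm $\norm{x-c^*x}/\psi^{-1}(1)=\min(s,m)/\psi^{-1}(1)\le w$, since a constant $v$ satisfies $\ee\,\psi(\norm{v}/\lambda)\le 1$ iff $\lambda\ge\norm{v}/\psi^{-1}(1)$. By \cref{lem:gauss} with $q=1$, its cost is $(m-s)_+^2/(2\sigma^2)$. So the work is the lower bound. When $s\ge m$ it is trivial, because $\KL\ge 0$; hence assume $s<m$.

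For the lower bound, I would fix any $\mu'$ with $\KL(\mu'\|\nu)<\infty$ and $W_\psi(\mu,\mu')\le w$. Since $W_\psi$ is an infimum, I would take for each $\varepsilon>0$ a coupling $(X,Y)$ with $X\sim\mu$, $Y\sim\mu'$ and $\norm{X-Y}_\psi\le w+\varepsilon$. Then \cref{lem:jensen} gives $\ee\norm{X-Y}\le (w+\varepsilon)\psi^{-1}(1)$. In particular $\mu'$ has a finite first moment, and its mean $b:=\ee Y$ satisfies
$$\norm{x-b}=\norm{\ee(X-Y)}\le \ee\norm{X-Y}\le s+\varepsilon\psi^{-1}(1).$$
Letting $\varepsilon\to0$ gives $\norm{b}\ge m-s$.

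It remains to show that, among all $\mu'$ with mean $b$, the relative entropy $\KL(\mu'\|N(0,\sigma^2I_d))$ is at least $\norm{b}^2/(2\sigma^2)$. I would prove this by a Gibbs variational or Donsker--Varadhan step with the linear test function $f(y)=\langle b,y\rangle/\sigma^2$:
$$\KL(\mu'\|\nu)\ge \ee_{\mu'}f-\log\ee_\nu e^{f}=\frac{\norm{b}^2}{\sigma^2}-\frac{\norm{b}^2}{2\sigma^2}=\frac{\norm{b}^2}{2\sigma^2}.$$
Equivalently, one can write $\KL(\mu'\|\nu)=\KL(\mu'\|N(b,\sigma^2I_d))+\ee_{\mu'}\log\frac{dN(b,\sigma^2 I_d)}{d\nu}$, where the last term equals $\norm{b}^2/(2\sigma^2)$ because the log-density ratio is affine in $y$. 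Combining this with $\norm{b}\ge m-s$ yields the bound, and equality holds exactly when $\mu'=N(b,\sigma^2I_d)$ with $\norm{b}=m-s$, which covers the deterministic shift.

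The step needing care is applying Donsker--Varadhan with an unbounded $f$. Here $e^{f}$ is $\nu$-integrable, and $\ee_{\mu'}f$ is well defined because $\mu'$ has a first moment (from \cref{lem:jensen}). I would justify it either by truncating $f$ and passing to the limit by monotone convergence, or by the direct decomposition argument above, which only needs $\log(d\mu'/d\nu)$ to be $\mu'$-quasi-integrable. Everything else is routine.
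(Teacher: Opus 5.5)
Your proposal is correct and follows essentially the paper's argument: \cref{lem:jensen} applied to an $\varepsilon$-approximate coupling bounds the mean displacement by $s$, and the Gibbs variational principle with a linear test function then gives $\KL(\mu'\|\nu)\ge(m-s)^2/(2\sigma^2)$. The only cosmetic difference is that you tilt along the mean $b$ of $\mu'$ itself, using $\norm{\ee(X-Y)}\le\ee\norm{X-Y}$, whereas the paper projects onto the fixed direction $u=x/m$ with the fixed tilt $\theta=\mu_0/\sigma^2$ and uses Cauchy--Schwarz; both routes handle the $\varepsilon\downarrow0$ limit correctly.
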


\begin{proof}
The upper bound is the feasibility of the deterministic shift together with \cref{lem:gauss}, so it suffices to settle the lower bound. We may assume $\mu_0=m-s>0$, else there is nothing to prove. Write $u=x/m$ and fix $\theta:=\mu_0/\sigma^2>0$.

Let $\mu'$ satisfy $W_{\psi}(\mu,\mu')\le w$; we may assume $\mu'\ll\nu$, else $\KL(\mu'\|\nu)=\infty$. Fix $\varepsilon>0$. Since $W_\psi$ is an infimum over couplings, which need not be attained, there is a coupling $(X,X')$ with $X\sim\mu$, $X'\sim\mu'$ and $\norm{D}_{\psi}\le w+\varepsilon$ for $D:=X-X'$. By \cref{lem:jensen}, $\ee\norm{D}\le (w+\varepsilon)\psi^{-1}(1)=s+\varepsilon\psi^{-1}(1)$; in particular $\ee\norm{D}<\infty$, so $\ee_{\mu'}\langle u,y\rangle$ is well defined and finite, and by Cauchy--Schwarz
\begin{equation}\label{eq:mean}
\ee_{\mu'}\big[\langle u,y\rangle\big]=\langle u,x\rangle-\ee\big[\langle u,D\rangle\big]\ \ge\ m-\ee\norm{D}\ \ge\ \mu_0-\varepsilon\psi^{-1}(1).
\end{equation}
Apply the Gibbs variational principle with the linear test function $f(y)=\theta\langle u,y\rangle$, whose exponential moment under $\nu$ is finite with $\log\ee_\nu e^{f}=\theta^2\sigma^2/2$:
$$\KL(\mu'\|\nu)\ \ge\ \ee_{\mu'}[f]-\log\ee_{\nu}e^{f}\ =\ \theta\,\ee_{\mu'}\big[\langle u,y\rangle\big]-\frac{\theta^2\sigma^2}{2}.$$
Combining with \eqref{eq:mean} and letting $\varepsilon\downarrow 0$ gives
$$\KL(\mu'\|\nu)\ \ge\ \theta\mu_0-\frac{\theta^2\sigma^2}{2}=\frac{\mu_0^2}{2\sigma^2},$$
by the choice $\theta=\mu_0/\sigma^2$, which maximizes the right-hand side. Taking the infimum over feasible $\mu'$ finishes the proof.
\end{proof}

\section{The case \texorpdfstring{$q>1$}{q>1}}

We construct a family of counterexamples $\mu_{\beta}'$ parametrized the parameter $\beta$. Fix $q>1$, $\sigma^2>0$, $w>0$ and $x\in\R^d$ with $0<s<m$, and let $u=x/m$. Let $X\sim\mu=N(x,\sigma^2I_d)$ and consider displacements of the form
\begin{equation}\label{eq:family}
D_\beta=\big(\alpha+\beta\langle u,X\rangle\big)u,\qquad \beta\in[0,\beta_{\max}),\quad \beta_{\max}:=\frac{w}{\sigma\sqrt2},
\end{equation}
with $\alpha=\alpha(\beta)\in\R$ to be chosen, and set our counterexample $\mu'_\beta=\opr{law}(X-D_\beta)$, so that $W_\psi(\mu,\mu'_\beta)\le \norm{D_\beta}_\psi$. Since $\langle u,X\rangle\sim N(m,\sigma^2)$, the scalar $\langle u,D_\beta\rangle$ is Gaussian with
$$\mu_D=\alpha+\beta m,\qquad \tau=\beta\sigma,$$
and $\norm{D_\beta}=|\langle u,D_\beta\rangle|$. We begin the analysis with a quick lemma.

\begin{lemma}\label{lem:budget}
Let $D$ be a scalar $N(a,\tau^2)$ variable with $2\tau^2<w^2$. Then
$$\ee\exp(D^2/w^2)=\Big(1-\frac{2\tau^2}{w^2}\Big)^{-1/2}\exp\Big(\frac{a^2}{w^2-2\tau^2}\Big),$$
so $\ee\exp(D^2/w^2)=2$ iff
\begin{equation}\label{eq:muD}
a^2=(w^2-2\tau^2)\Big(\log 2+\half\log\Big(1-\frac{2\tau^2}{w^2}\Big)\Big).
\end{equation}
\end{lemma}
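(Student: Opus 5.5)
The plan is to compute the Gaussian integral directly by completing the square, and then read off the equivalence. Write $D=a+\tau Z$ with $Z\sim N(0,1)$. Then
$$\ee\exp(D^2/w^2)=\frac{1}{\sqrt{2\pi}}\int_{\R}\exp\Big(\frac{(a+\tau z)^2}{w^2}-\frac{z^2}{2}\Big)\,dz .$$
The exponent is a quadratic in $z$ with leading coefficient $-\tfrac12\big(1-\tfrac{2\tau^2}{w^2}\big)=:-\tfrac{\kappa}{2}$. The hypothesis $2\tau^2<w^2$ is exactly the condition $\kappa>0$, which guarantees that the integral converges. (If $\tau=0$ the formula is trivially $e^{a^2/w^2}$, consistent with the claimed expression, so I may assume $\tau>0$, though the computation does not actually need this.)

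Next I complete the square. The exponent equals
$$-\frac{\kappa}{2}z^2+\frac{2a\tau}{w^2}z+\frac{a^2}{w^2}.$$
Integrating the Gaussian gives the factor $\kappa^{-1/2}$ times
$$\exp\Big(\frac{a^2}{w^2}+\frac{(2a\tau/w^2)^2}{2\kappa}\Big).$$
The exponent then simplifies:
$$\frac{a^2}{w^2}\Big(1+\frac{2\tau^2}{w^2\kappa}\Big)=\frac{a^2}{w^2}\cdot\frac{1}{\kappa}=\frac{a^2}{w^2-2\tau^2}.$$
This uses $w^2\kappa+2\tau^2=w^2$. The result is exactly the displayed formula.

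For the equivalence, I set the expression equal to $2$ and take logarithms:
$$-\tfrac12\log\kappa+\frac{a^2}{w^2-2\tau^2}=\log 2.$$
Multiplying by $w^2-2\tau^2>0$ gives \eqref{eq:muD}. Every step is reversible: the log is monotone and the multiplier is positive. So the condition holds if and only if the expectation equals $2$.

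There is no real obstacle here. The only point needing care is the algebraic simplification of the completed-square exponent to $a^2/(w^2-2\tau^2)$, together with noting that $2\tau^2<w^2$ ensures both convergence and the positivity used in the rearrangement.
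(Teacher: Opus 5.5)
Your proof is correct and follows essentially the same route as the paper: a direct Gaussian integral by completing the square, then taking logarithms and multiplying by the positive quantity $w^2-2\tau^2$. Writing $D=a+\tau Z$, rather than integrating against the $N(a,\tau^2)$ density as the paper does, has one small advantage: it covers the degenerate case $\tau=0$ without a separate remark, and that case is the one used later at $\beta=0$.
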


\begin{proof}
With $c=1/w^2$,
$$\ee e^{cD^2}=\int \frac{e^{cd^2}}{\sqrt{2\pi\tau^2}}e^{-(d-a)^2/(2\tau^2)}\,dd
=(1-2c\tau^2)^{-1/2}\exp\Big(\frac{ca^2}{1-2c\tau^2}\Big),$$
valid exactly when $2c\tau^2<1$. Taking logarithms and solving for $a^2$ gives the desired result.
\end{proof}

Define $\mu_D(\beta)\ge0$ by \eqref{eq:muD} with $\tau=\beta\sigma$, i.e.
\begin{equation}\label{eq:muDbeta}
\mu_D(\beta)^2=(w^2-2\beta^2\sigma^2)\Big(\log 2+\half\log\Big(1-\frac{2\beta^2\sigma^2}{w^2}\Big)\Big),
\end{equation}
which is positive for $\beta$ in a neighbourhood of $0$ since $\mu_D(0)^2=w^2\log 2=s^2>0$, and set $\alpha(\beta):=\mu_D(\beta)-\beta m$. With this choice $\norm{D_\beta}_\psi=w$ exactly, so $\mu'_\beta$ is feasible. The crucial point of \eqref{eq:muDbeta} is the following corollary:

\begin{corollary}\label{cor:even}
$\beta\mapsto\mu_D(\beta)^2$ is an even, real-analytic function of $\beta$ near $0$ with $\mu_D(0)=s$. Consequently $\mu_D$ is even near $0$ and $\mu_D'(0)=0$; in fact $\mu_D(\beta)=s-O(\beta^2)$.
\end{corollary}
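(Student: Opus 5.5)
The plan is to read everything off the explicit formula \eqref{eq:muDbeta}. Write $g(\beta):=\mu_D(\beta)^2$ and substitute $t:=2\beta^2\sigma^2/w^2$. Then
$$g(\beta)=w^2(1-t)\Big(\log 2+\half\log(1-t)\Big),$$
so $g=h\circ\big(\beta\mapsto 2\beta^2\sigma^2/w^2\big)$ with
$$h(t):=w^2(1-t)\Big(\log 2+\half\log(1-t)\Big).$$
The function $h$ is real-analytic on $(-\infty,1)$, since $\log(1-t)$ is analytic there, and $t$ is a polynomial in $\beta^2$. Hence $g$ is real-analytic on $|\beta|<\beta_{\max}$ and even in $\beta$. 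At $\beta=0$ we get $g(0)=w^2\log 2=s^2$.

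Next I take square roots. Since $g(0)=s^2>0$ and $g$ is continuous, $g>0$ on some symmetric neighbourhood $(-\beta_0,\beta_0)$ of $0$. On that neighbourhood $\mu_D=\sqrt{g}$, using the nonnegative branch fixed in the definition. This is a composition of $g$ with $\sqrt{\cdot}$, which is analytic on $(0,\infty)$. So $\mu_D$ is real-analytic near $0$, and even because $g$ is, and $\mu_D(0)=s$. An even differentiable function has odd derivative, so $\mu_D'(0)=0$. Analyticity then gives the expansion $\mu_D(\beta)=s+c_2\beta^2+O(\beta^4)$.

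For the sign of $c_2$ I compute $h'(0)$:
$$h'(t)=w^2\Big(-\log 2-\half\log(1-t)-\half\Big),\qquad h'(0)=-w^2\big(\log 2+\half\big)<0.$$
It follows that $g(\beta)=s^2-2\sigma^2(\log 2+\half)\beta^2+O(\beta^4)$. Taking the square root,
$$\mu_D(\beta)=s-\frac{\sigma^2(\log 2+\half)}{s}\,\beta^2+O(\beta^4),$$
which is the claimed $s-O(\beta^2)$, with an explicit negative quadratic coefficient.

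There is no real obstacle here. The only points needing care are choosing the neighbourhood small enough that $g>0$, so that the square root stays analytic and the nonnegative branch is the right one, and noting that $2\beta^2\sigma^2<w^2$ holds automatically there.
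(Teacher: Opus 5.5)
Your proposal is correct and matches the paper, which gives no separate proof and reads the corollary directly off \eqref{eq:muDbeta}. That formula is an analytic function of $\beta^2$ on $|\beta|<\beta_{\max}$ with value $s^2>0$ at $0$, so the nonnegative square root is even and analytic near $0$ with $\mu_D'(0)=0$. Your explicit coefficient $\mu_D(\beta)=s-\sigma^2(\log 2+\tfrac12)\beta^2/s+O(\beta^4)$ checks out and slightly sharpens the stated $s-O(\beta^2)$.
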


We may now explicitly compute the divergence.

\begin{lemma}\label{lem:pushforward}
For $\beta\in[0,1)$ with $\mu_D(\beta)$ as above, $\mu'_\beta$ is Gaussian with
$$\langle u,\cdot\rangle\text{-marginal }=N\big(m-\mu_D(\beta),\,(1-\beta)^2\sigma^2\big),$$
and with all marginals orthogonal to $u$ equal to those of $\mu$, namely $N(0,\sigma^2)$. Hence, writing $\sigma_q^2(\beta):=q\sigma^2-(q-1)(1-\beta)^2\sigma^2$ and $\mu_1(\beta):=m-\mu_D(\beta)$,
\begin{equation}\label{eq:J}
J(\beta):=R_q(\mu'_\beta\|\nu)=\frac{q\,\mu_1(\beta)^2}{2\sigma_q^2(\beta)}+\frac{1}{1-q}\log\frac{\sigma_q(\beta)}{\big((1-\beta)\sigma\big)^{1-q}\sigma^{q}}.
\end{equation}
\end{lemma}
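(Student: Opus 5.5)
Write $X = x + \sigma Z$ with $Z\sim N(0,I_d)$, and decompose $Z = \langle u,Z\rangle u + Z^\perp$, where $Z^\perp$ is the projection onto $u^\perp$. The two pieces $\langle u,Z\rangle$ and $Z^\perp$ are independent.

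Since $D_\beta$ is parallel to $u$, the map $X\mapsto X-D_\beta$ is affine. Its $u^\perp$-component is the identity, so $X-D_\beta$ is a Gaussian vector. Its component orthogonal to $u$ is $\sigma Z^\perp$, which is unchanged and independent of the $u$-coordinate. Its $u$-coordinate is
$$\langle u,X\rangle - \alpha - \beta\langle u,X\rangle = (1-\beta)\langle u,X\rangle - \alpha .$$
Since $\langle u,X\rangle\sim N(m,\sigma^2)$, this is Gaussian with variance $(1-\beta)^2\sigma^2$ and mean
$$(1-\beta)m-\alpha(\beta) = m-\beta m-\mu_D(\beta)+\beta m = m-\mu_D(\beta).$$
Here the definition $\alpha(\beta)=\mu_D(\beta)-\beta m$ gives exactly the cancellation. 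Therefore $\mu'_\beta$ is the product, in an orthonormal basis $\{u,e_2,\dots,e_d\}$, of $N(m-\mu_D(\beta),(1-\beta)^2\sigma^2)$ with $d-1$ copies of $N(0,\sigma^2)$. The hypothesis $\beta<1$ ensures the variance is positive, so the law is nondegenerate.

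The reference measure $\nu=N(0,\sigma^2 I_d)$ is rotation invariant, so in the same basis it is the product of $d$ copies of $N(0,\sigma^2)$. We use the additivity statement of \cref{lem:veh} together with invariance of $R_q$ under the bijection given by the rotation. The divergence then splits into one term for the $u$-coordinate plus $d-1$ terms $R_q(N(0,\sigma^2)\|N(0,\sigma^2))=0$.

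For the remaining scalar term we apply \cref{lem:veh} with $\mu_1=m-\mu_D(\beta)$, $\sigma_1=(1-\beta)\sigma$, $\mu_2=0$ and $\sigma_2=\sigma$. This gives $\sigma_q^2=(1-q)(1-\beta)^2\sigma^2+q\sigma^2$, which matches the definition of $\sigma_q^2(\beta)$, and it yields \eqref{eq:J} verbatim. We need $\sigma_q^2(\beta)>0$, and this holds because $(1-\beta)^2<1$ for $\beta\in[0,1)$ gives $\sigma_q^2(\beta)>q\sigma^2-(q-1)\sigma^2=\sigma^2>0$.

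The only care needed is bookkeeping. First, $\mu_D(\beta)$ must be well defined, which requires $\beta<\beta_{\max}$ and the right-hand side of \eqref{eq:muDbeta} to be nonnegative; this is implicitly assumed in the statement, and we take $\beta$ in that range intersected with $[0,1)$. Second, one should check that the rotation to an adapted basis is legitimate for Rényi divergence, which holds because $R_q$ is invariant under measurable bijections.
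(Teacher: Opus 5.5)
Your proof is correct and follows the paper's argument: only the $u$-coordinate is altered by the affine map, the choice $\alpha(\beta)=\mu_D(\beta)-\beta m$ makes its mean $m-\mu_D(\beta)$, the orthogonal coordinates already match $\nu$ and contribute zero by additivity, and \cref{lem:veh} is applied to the $u$-marginal. One tiny slip is that at $\beta=0$ you have $(1-\beta)^2=1$, so the bound should read $\sigma_q^2(\beta)\ge\sigma^2>0$ rather than a strict inequality; this still gives what you need, and for all of $[0,1)$ rather than only near $\beta=0$ as in the paper's closing remark.
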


\begin{proof}
Since $D_\beta$ is a multiple of $u$, only the $u$-component of $X$ is altered:
$$\langle u,X-D_\beta\rangle=(1-\beta)\langle u,X\rangle-\alpha\sim N\big((1-\beta)m-\alpha,(1-\beta)^2\sigma^2\big),$$
and $(1-\beta)m-\alpha=m-\mu_D(\beta)$. The components of $X$ orthogonal to $u$ are independent of $\langle u,X\rangle$ and untouched, and since $x=mu$ they are already distributed as $N(0,\sigma^2)$, matching $\nu$. By additivity of $R_q$ over independent factors (\cref{lem:veh}) the orthogonal directions contribute $0$, and \eqref{eq:J} is \cref{lem:veh} applied to the $u$-marginal. Note $\sigma_q^2(0)=\sigma^2>0$, so \eqref{eq:J} is finite for small $\beta$.
\end{proof}

To finish, we can show that $J$ is decreasing, because at $\beta=0$ we get
\begin{equation}\label{eq:J0}
J(0)=\frac{q\,\mu_0^2}{2\sigma^2},
\end{equation}
which yields the desired result.

\begin{theorem}\label{thm:qbig}
The conjecture is false for every $q>1$ whenever $0<s<m$. Indeed, with $J$ as in \eqref{eq:J},
$$J'(0)=-\frac{q(q-1)\mu_0^2}{\sigma^2}<0,$$
so that for all sufficiently small $\beta>0$,
$$R^{(w)}_{q}\big(N(x,\sigma^2I_d)\,\big\|\,N(0,\sigma^2I_d)\big)\ \le\ J(\beta)\ <\ \frac{q(m-s)^2}{2\sigma^2}.$$
\end{theorem}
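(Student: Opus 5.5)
We differentiate the explicit formula \eqref{eq:J} at $\beta=0$. We then conclude from the sign of $J'(0)$ together with the feasibility of $\mu'_\beta$ established before \cref{cor:even}.

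\textbf{Step 1 (the mean term).} Write $J=A+B$ with $A(\beta)=q\mu_1(\beta)^2/(2\sigma_q^2(\beta))$ and $B$ the logarithmic term. By \cref{cor:even}, $\mu_D'(0)=0$, so $\mu_1'(0)=0$ and $\mu_1(0)=m-s=\mu_0$. Next, $\sigma_q^2(\beta)=q\sigma^2-(q-1)(1-\beta)^2\sigma^2$ has $\sigma_q^2(0)=\sigma^2$ and $\frac{d}{d\beta}\sigma_q^2(0)=2(q-1)\sigma^2$. Hence
$$A'(0)=-\frac{q\mu_0^2}{2\sigma^4}\cdot 2(q-1)\sigma^2=-\frac{q(q-1)\mu_0^2}{\sigma^2}.$$
This step uses the key point of the construction: because the budget curve $\mu_D$ is flat at $0$, the first-order cost of adding randomness to the mean shift vanishes, while the shrinking factor $(1-\beta)$ enlarges the effective variance $\sigma_q^2$.

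\textbf{Step 2 (the variance term).} We have
$$B(\beta)=\frac{1}{1-q}\Big[\tfrac12\log\sigma_q^2(\beta)-(1-q)\log(1-\beta)-(1-q)\log\sigma-q\log\sigma\Big].$$
Differentiating at $0$ gives
$$B'(0)=\frac{1}{1-q}\Big[\frac{2(q-1)\sigma^2}{2\sigma^2}+(1-q)\Big]=\frac{1}{1-q}\big[(q-1)+(1-q)\big]=0.$$
This is consistent with $R_q(N(0,\tau^2)\|N(0,\sigma^2))$ having a minimum at $\tau=\sigma$. Therefore $J'(0)=A'(0)=-q(q-1)\mu_0^2/\sigma^2$, which is strictly negative since $q>1$ and $\mu_0>0$.

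\textbf{Step 3 (conclusion).} $J$ is differentiable near $0$: $\mu_D$ is real-analytic and positive near $0$ by \cref{cor:even}, and $\sigma_q^2>0$ near $0$. By \eqref{eq:J0}, $J(0)=q\mu_0^2/(2\sigma^2)$, so $J(\beta)<J(0)$ for all sufficiently small $\beta>0$. Each such $\mu'_\beta$ satisfies $W_\psi(\mu,\mu'_\beta)\le\norm{D_\beta}_\psi=w$, so the shifted divergence is at most $J(\beta)$, which is strictly less than the conjectured value.

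The main obstacle is Step 1's use of $\mu_D'(0)=0$, which requires care in taking the square root of $\mu_D^2$. Since $\mu_D(0)^2=s^2>0$, the function $\mu_D=\sqrt{\mu_D^2}$ is analytic near $0$, and its evenness gives a vanishing derivative. One should also check that $\norm{D_\beta}_\psi=w$ exactly. This follows from \cref{lem:budget}, because $\lambda\mapsto\ee\psi(|D|/\lambda)$ is strictly decreasing, so equality $\ee e^{D^2/w^2}=2$ pins the norm at $w$.
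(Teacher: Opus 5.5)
Your proposal is correct and follows the paper's own proof essentially line for line. You split $J$ into the mean term and the log term, use $\mu_D'(0)=0$ from \cref{cor:even} so that only $\sigma_q^2$ moves in the mean term (giving $-q(q-1)\mu_0^2/\sigma^2$), check that the log term is stationary at $\beta=0$, and conclude from $J(0)=q\mu_0^2/(2\sigma^2)$ and the feasibility of $\mu'_\beta$. Your extra remark that $\norm{D_\beta}_\psi$ equals $w$ exactly is harmless, though feasibility only needs $\ee\,\psi(\norm{D_\beta}/w)=1$, which already gives $\norm{D_\beta}_\psi\le w$.
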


\begin{proof}
By \cref{lem:pushforward} each $\mu'_\beta$ is feasible, so $R^{(w)}_q(\mu\|\nu)\le J(\beta)$ for all small $\beta\ge0$; in view of \eqref{eq:J0} it suffices to compute $J'(0)$ and check that it is negative. Write $v(\beta):=(1-\beta)^2\sigma^2$ for the variance of the $u$-marginal of $\mu'_\beta$, so that $\sigma_q^2=q\sigma^2-(q-1)v$ and
$$v(0)=\sigma^2,\qquad v'(0)=-2\sigma^2,\qquad \sigma_q^2(0)=\sigma^2,\qquad \big(\sigma_q^2\big)'(0)=-(q-1)v'(0)=2(q-1)\sigma^2.$$
Differentiate the two terms of \eqref{eq:J} separately.

For the mean term, $\mu_1'(0)=-\mu_D'(0)=0$ by \cref{cor:even}, so only the denominator moves:
$$\frac{d}{d\beta}\bigg[\frac{q\mu_1^2}{2\sigma_q^2}\bigg]_{\beta=0}=-\frac{q\mu_0^2}{2\sigma^4}\big(\sigma_q^2\big)'(0)=-\frac{q\mu_0^2}{2\sigma^4}\cdot 2(q-1)\sigma^2=-\frac{q(q-1)\mu_0^2}{\sigma^2}.$$

For the log term, write it as
$$\frac{1}{1-q}\bigg[\half\log\sigma_q^2-\frac{1-q}{2}\log v-\frac{q}{2}\log\sigma^2\bigg],$$
whose derivative at $\beta=0$ is
$$\frac{1}{1-q}\cdot\half\bigg[\frac{(\sigma_q^2)'(0)}{\sigma_q^2(0)}-(1-q)\frac{v'(0)}{v(0)}\bigg]
=\frac{1}{2(1-q)}\Big[2(q-1)-(1-q)(-2)\Big]=0.$$
Equivalently: as a function of $v$ the log term has derivative $\half\big(1/\sigma_q^2-1/v\big)$, which vanishes at $v=\sigma^2$, so the log-determinant contribution is stationary at the deterministic shift and only enters at second order.

Adding the two gives $J'(0)=-q(q-1)\mu_0^2/\sigma^2$, which is strictly negative because $q>1$ and $\mu_0=m-s>0$. Since $J$ is differentiable at $0$, $J(\beta)<J(0)=q\mu_0^2/(2\sigma^2)$ for all sufficiently small $\beta>0$.
\end{proof}

\subsection*{Note:}
Claude Fable $5$ was used in the drafting of this note. This is the first result due to the Large Math Initiative, an initiative that aims to streamline the use of AI for math research.

\bibliographystyle{abbrvnat}
\bibliography{ref}
\end{document}